\newtheorem{theorem}{Theorem}
\newtheorem{proposition}[theorem]{Proposition}
\newtheorem{lemma}[theorem]{Lemma}
\theoremstyle{definition}
\newtheorem*{problem*}{Problem}
\newtheorem*{maintheorem}{Main Theorem}
\newtheorem{question}{Question}
\theoremstyle{remark}
\newcommand{\C}{\mathbb C}
\newcommand{\R}{\mathbb R}
\newcommand{\M}{\mathcal M}
\newcommand{\g}{\mathfrak g}
\newcommand{\To}{\longrightarrow}
\newcommand{\isom}{\cong}
\begin{document}

\title[Zero divisors in algebras of Lie groups]{The problem of zero divisors in convolution algebras of supersolvable Lie groups}
\author{{\L}ukasz Garncarek}
\address{University of Wroc{\l}aw, Institute of Mathematics,
  pl.~Grunwaldzki 2/4\\ 50-384 Wroc{\l}aw\\ Poland}
\email{Lukasz.Garncarek@math.uni.wroc.pl}

\subjclass[2010]{22E25, 43A10}
\keywords{}

\begin{abstract}
  We prove a variant of the Titchmarsh convolution theorem for simply
  connected supersolvable Lie groups, namely we show that the
  convolution algebras of compactly supported continuous functions and
  compactly supported finite measures on such groups do not contain
  zero divisors. This can be also viewed as a topological version of
  the zero divisor conjecture of Kaplansky.
\end{abstract}

\maketitle

\section{Introduction}
\label{sec:introduction}

The Titchmarsh convolution theorem asserts that if $f,g\in L^1(\R)$
vanish on $(-\infty,0)$, and $f*g(x) = 0$ for $x\leq T$, then there
exist real numbers $\alpha$ and $\beta$, such that $\alpha+\beta=T$,
and the functions $f$ and $g$ vanish on $[0,\alpha]$ and $[0,\beta]$
respectively. Existence of numerous proofs of this theorem in the
literature (\cite{Crum1941, Doss1988, Dufresnoy1947, Helson1983,
  Mikusinski1953, Mikusinski1959, Titchmarsh1926}) hints at its
significance. One of its corollaries states that the convolution
algebra $\M_c(\R)$ of compactly supported finite complex measures on
$\R$ has no zero divisors. It can be proved directly, by noticing that
the holomorphic Fourier transform defines an injective homomorphism of
$\M_c(\R)$ into $H(\C)$, the algebra of entire functions on $\C$ with
pointwise multiplication. Unlike the original theorem, which can not
be neatly adapted to a more general context of topological groups, the
formulation of this corollary makes sense for any such
group. In~\cite{Weiss_1968} it was proved for locally
compact abelian groups without nontrivial compact subgroups.

On the other hand, the Kaplansky zero divisor conjecture states that
the group algebra $K[G]$ of a discrete torsion-free group $G$ over a
field $K$ has no zero divisors. In the book~\cite{Passman1977} the
proofs of this conjecture in the cases of right-orderable,
supersolvable, and polycyclic-by-finite groups are presented.

A topological analogue of a torsion-free group is a compact-free
group, i.e. a~group without nontrivial compact subgroups. In view of
the aforementioned results, it seems reasonable to state the
following topological zero divisor problem:

\begin{problem*}
  Let $G$ be a compact-free topological group. Is it true that the
  convolution algebra $\M_c(G)$ contains no zero divisors?
\end{problem*}

Since the group algebra $\C[G]$ of $G$, viewed as a discrete group,
naturally embeds into $\M_c(G)$, an affirmative answer to this problem
implies that $G$ satisfies the original Kaplansky zero divisor
conjecture for fields of characteristic 0.

A general compact-free Lie group is of the form $R \rtimes \widetilde{SL_2}
(\R)^n$, where $R$ is simply connected and solvable, and $\widetilde{SL_2}(\R)$ is the universal cover of $SL_2(\R)$
(\cite{onishchik1994lie}, Theorem 3.2). We may thus break the original
problem into the following three questions:
\begin{question}\label{q:solvable}
  Let $G$ be a simply connected solvable Lie group. Can $\M_c(G)$
  contain zero divisors? 
\end{question}
\begin{question}
  Does $\M_c(\widetilde{SL_2}(\R))$ contain zero divisors?
\end{question}
\begin{question}
  Let $G_1$ and $G_2$ be Lie groups such that $\M_c(G_i)$ contain no
  zero divisors. Can $\M_c(G_1\rtimes G_2)$ contain zero divisors?
\end{question}
Negative answers to all three of them would yield a positive solution
to the topological zero divisor problem for all Lie groups. Actually,
since any linear subspace of a Lie algebra containing its commutant
is an ideal, a solvable Lie algebra decomposes as a direct sum of a
codimension $1$ ideal and a $1$-dimensional subalgebra. This gives a
decomposition of the corresponding Lie group into a semidirect
product, hence negative answers to questions 2 and 3 would be sufficient.

In this short paper we try to attack the first of the questions
above. We show the following:
\begin{maintheorem}
  Let $G$ be a connected, simply connected, supersolvable Lie group. Then
  $\M_c(G)$ has no zero divisors.
\end{maintheorem}
Our proof relies on some properties of holomorphic functions of one
variable. For solvable, but not supersolvable Lie groups, we
would have to consider holomorphic functions of several variables, in
which case the proof would break.

It is worth mentioning that the proof of the Kaplansky conjecture in
the poly\-cyclic-by-finite situation required completely different
methods than the supersolvable case (and supersolvable Lie groups are
somewhat analogous to supersolvable discrete groups). This may lead to
the supposition that also in the topological zero divisor problem a
different approach is required to deal with non-supersolvable Lie
groups.

We also remark that there is a variant of the Kaplansky conjecture in
characteristic $0$, known as the Linnell conjecture
(\cite{Linnell1991,Linnell1998}), which states that for a
discrete, torsion-free group $G$ and nonzero functions $f\in \C[G]$
and $g\in\ell^2(G)$ the convolution $f*g$ is nonzero. In
\cite{Ludwig2006} it was shown that the topological counterpart of
this conjecture, with $f\in C_c(G)$ and $g\in L^2(G)$, fails for every
nonabelian connected nilpotent Lie group.

A special case of the Main Theorem, with $G$ being the Heisenberg
group $H_n$, was proved and used in~\cite{GarncarekMSc} to show that
some natural representations of the group of contactomorphisms of an
arbitrary contact manifold are irreducible. We are currently working
on generalizing these results, so that the full power of the Main Theorem
could be utilized.

The paper is organized as follows. In
Section~\ref{sec:conv-algebr-locally} we discuss the convolution
algebras associated to a locally compact group. In
Section~\ref{sec:extensions-r} we show that if $\M_c(G)$ has no zero
divisors, then this property passes to all extensions of $G$ by
$\R$. Finally, in Section~\ref{sec:nilpotent-lie-groups} the solution
of the zero divisor problem for connected supersolvable Lie groups is
presented.

The author wishes to thank Jan Dymara, Jacques Faraut, Pawe{\l}
G{\l}owacki, Jean Ludwig, and the anonymous referee for their helpful
comments.

\section{Convolution algebras of locally compact groups}
\label{sec:conv-algebr-locally}

Let $G$ be a topological group. Denote by $\M_c(G)$ the set of all
compactly supported finite complex Borel measures on $G$. The convolution
$\mu * \nu$ of measures $\mu,\nu\in \M_c(G)$ is the measure defined by
the condition
\begin{equation}
  \int f(x)\,d(\mu*\nu)(x) = \iint f(xy)\,d\mu(x)d\nu(y).
\end{equation}
It is again compactly supported and finite, hence the operation of
convolution turns $\M_c(G)$ into an associative algebra. A continuous
group homomorphism $q\colon G \to H$ defines a pushforward map
$q_*\colon\M_c(G)\to \M_c(H)$, given by $q_*\mu(A) =
\mu(q^{-1}(A))$. It is a homomorphism of algebras.

Suppose that $K \leq G$ is a nontrivial compact subgroup. There exists
$1\ne a\in K$, and we have $(\delta_1-\delta_a) * \lambda_K = 0$,
where $\delta_x$ is the Dirac measure supported on $x$, and
$\lambda_K$ is the Haar measure on $K$. Since the inclusion of $K$
into $G$ is a continuous homomorphism, $\M_c(G)$ has zero
divisors. Hence the condition of being compact-free is necessary for
$\M_c(G)$ to contain no zero divisors.

From now on, suppose that $G$ is locally compact. We may thus fix a
left Haar measure $\lambda=\lambda_G$ on $G$. The modular function
$\Delta_G\colon G\to \R_+$ is then defined by $(R_x)_*\lambda =
\Delta_G(x)d\lambda$, where $R_x\colon G\to G$ is the right
translation by $x$. The space $C_c(G)$ of compactly supported
continuous complex-valued functions on $G$ with convolution
\begin{equation}
  f*g(x)=\int f(y)g(y^{-1}x)\,d\lambda(y)
\end{equation}
is also an algebra. It is embedded in $\M_c(G)$ through the
homomorphism $f \mapsto fd\lambda$. We will identify $C_c(G)$ with a
subalgebra of $\M_c(G)$ through this embedding.

\begin{lemma} \label{lem:CM_equiv}
  The following conditions are equivalent:
  \begin{enumerate}
  \item $\M_c(G)$ has no zero divisors, 
  \item $C_c(G)$ has no zero divisors.
  \end{enumerate}
\end{lemma}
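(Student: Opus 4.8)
The plan is to prove the two implications separately. The forward implication (1) $\Rightarrow$ (2) is immediate: since $C_c(G)$ is identified with a subalgebra of $\M_c(G)$ via $f\mapsto f\,d\lambda$, any nonzero $f,g\in C_c(G)$ with $f*g=0$ would already be zero divisors in $\M_c(G)$. Hence if $\M_c(G)$ has no zero divisors, neither does $C_c(G)$.

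The substance is the reverse implication (2) $\Rightarrow$ (1), which I would prove by contraposition: assuming $\M_c(G)$ has zero divisors, I produce zero divisors in $C_c(G)$. The structural fact I would establish first is a \emph{regularization lemma}: for every $\phi\in C_c(G)$ and every $\rho\in\M_c(G)$, both one-sided convolutions $\phi*\rho$ and $\rho*\phi$ are (given by) compactly supported continuous functions. This follows from a change-of-variables computation in the defining double integral $\int f\,d(\rho*\phi)=\iint f(xy)\,d\rho(x)\,\phi(y)\,d\lambda(y)$; substituting $z=xy$ and using left-invariance yields $\rho*\phi=k\,d\lambda$ with $k(z)=\int\phi(x^{-1}z)\,d\rho(x)$, while the analogous computation for $\phi*\rho$ uses the modular function to give $\phi*\rho=h\,d\lambda$ with $h(z)=\int\phi(zy^{-1})\Delta_G(y)\,d\rho(y)$. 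One then checks that $k$ and $h$ are continuous (by uniform continuity of $\phi$ and dominated convergence) and supported in a product of the compact supports of $\phi$ and $\rho$, so indeed $\phi*\rho,\rho*\phi\in C_c(G)$.

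Granting this, let $\mu,\nu\in\M_c(G)$ be nonzero with $\mu*\nu=0$, and set $f=\phi*\mu$ and $g=\nu*\psi$ for functions $\phi,\psi\in C_c(G)$ still to be chosen. By the regularization lemma $f,g\in C_c(G)$, and by associativity $f*g=\phi*(\mu*\nu)*\psi=0$. It remains to arrange $f\neq 0$ and $g\neq 0$. For this I would invoke an approximate identity $(\phi_i)$ in $C_c(G)$ — nonnegative functions with $\int\phi_i\,d\lambda=1$ whose supports shrink to the identity $1\in G$ — and verify that $\phi_i*\mu\to\mu$ and $\nu*\psi_i\to\nu$ in the weak topology obtained by pairing with $C_c(G)$. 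Since $\mu,\nu\neq 0$ and a measure is determined by its pairings against $C_c(G)$, there must exist indices with $\phi_i*\mu\neq 0$ and $\nu*\psi_j\neq 0$; choosing $\phi=\phi_i$ and $\psi=\psi_j$ then gives nonzero $f,g\in C_c(G)$ with $f*g=0$, contradicting (2).

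The main obstacle is the approximate-identity step: making precise that $\phi_i*\mu\to\mu$ weakly and deducing that $\phi_i*\mu\neq 0$ for at least one $i$. Concretely this is a dominated-convergence argument applied to $\iint f(xy)\phi_i(x)\,d\lambda(x)\,d\mu(y)$ as the mass of $\phi_i$ concentrates at $1$, using boundedness of $f$ and the joint compact-support bookkeeping from the regularization lemma to justify passage to the limit. Everything else is bookkeeping to keep the regularized objects inside $C_c(G)$.
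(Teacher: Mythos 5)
Your proposal is correct and follows essentially the same route as the paper: both arguments regularize the hypothetical zero divisors $\mu,\nu\in\M_c(G)$ by forming $f*\mu$ and $\nu*g$ with $f,g\in C_c(G)$, check that these one-sided convolutions land in $C_c(G)$ (your formula $h(z)=\int\phi(zy^{-1})\Delta_G(y)\,d\rho(y)$ is exactly the paper's equation for $f*\mu$), and conclude by associativity. The only inessential difference is how nonvanishing is arranged: the paper evaluates $f*\mu$ at the identity and uses the bijection $f(y)\mapsto f(y^{-1})\Delta_G(y)$ of $C_c(G)$ to realize any pairing $\int h\,d\mu\ne 0$ directly, whereas you reach the same conclusion via an approximate identity.
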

\begin{proof}
  Suppose that $\mu,\nu\in\M_c(G)$ are nonzero, and $\mu*\nu = 0$. For
  any $f,g\in C_c(G)$ we then have
  \begin{equation}
    (f * \mu) * (\nu * g) = 0.
  \end{equation}
  But $f*\mu\in C_c(G)$, namely
  \begin{equation} \label{eq:f_mu}
    f*\mu(x) = \int f(xy^{-1})\Delta_G(y)\,d\mu(y).
  \end{equation}
  Notice, that the assignment $f(y) \mapsto f(y^{-1})\Delta_G(y)$ is a
  bijection of $C_c(G)$ with itself. In particular, if we set $x=e$ in
  equation~\eqref{eq:f_mu}, we may choose $f$ in such a way that
  $f*\mu(e)\ne 0$.

  Similarly, $\nu * g \in C_c(G)$, and we may choose $g$ so that
  $\nu*g\ne 0$. We therefore obtain a pair of zero divisors in
  $C_c(G)$. The other implication is obvious.
\end{proof}

If $N$ is a closed normal subgroup of $G$, and $q\colon G\to G/N$ is
the quotient map, then the measure pushforward map $q_*$ sends the
subalgebra $C_c(G)$ into $C_c(G/N)$. It is defined in this case by the
formula
\begin{equation}
  q_*f(xN) = \int_N f(xn)d\lambda_N(n).
\end{equation}

\section{Extensions by $\R$}
\label{sec:extensions-r}

Consider an extension $1\To A\To G\overset{q}\To Q\To 1$ of Lie
groups, such that $A\isom\R$. Choose a fixed identification of $A$
with $\R$. Denote by $\psi_+$ and $\psi_-$ the
indicator functions of intervals $[0,\infty)$ and $(-\infty,0]$ in
$A$, respectively. Now define operations $\gamma_+$ and $\gamma_-$ on
$C_c(G)$ by
\begin{equation}
  \label{eq:def-gamma}
  \gamma_\pm(f)(x)=\int_Af(xa^{-1})\psi_\pm(a)\,da,
\end{equation}
In general they do not preserve compact supports, however we have the following. 

\begin{lemma}\label{lem:int_cpt_supp}
  Suppose that $f\in C_c(G)$ satisfies\/ $q_*f=0$. Then the functions
  $\gamma_\pm(f)$ are also in $C_c(G)$. 
\end{lemma}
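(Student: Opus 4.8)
The plan is to reinterpret $\gamma_\pm(f)$ as running integrals along the cosets of $A$ and to exploit the hypothesis $q_*f = 0$, which says precisely that $f$ integrates to zero over each coset. This is what forces the running integrals, which a priori merely stabilize at infinity, to actually return to zero, and it is ultimately the reason the supports stay compact.

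First I would rewrite the defining formula~\eqref{eq:def-gamma}. Substituting $u = a^{-1}$ and using that Haar measure on $A\isom\R$ is inversion-invariant, one obtains
\[
  \gamma_+(f)(x) = \int_{\{u\in A\,:\,u\le 0\}} f(xu)\,du,
  \qquad
  \gamma_-(f)(x) = \int_{\{u\in A\,:\,u\ge 0\}} f(xu)\,du,
\]
where $\le$ refers to the fixed identification $A\isom\R$. Adding these and using inversion invariance once more,
\[
  \gamma_+(f)(x) + \gamma_-(f)(x) = \int_A f(xu)\,du = q_*f(q(x)) = 0,
\]
so $\gamma_-(f) = -\gamma_+(f)$, and it suffices to treat $\gamma_+$.

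Second, and this is the crux, I would bound the support. Let $K$ be a compact set containing $\operatorname{supp} f$. The set $C := (K^{-1}K)\cap A$ is compact, since $K^{-1}K$ is compact and $A$, being a closed subgroup, is closed in $G$; fix $R$ with $C\subseteq[-R,R]$ in $A\isom\R$. Now suppose $\gamma_+(f)(x)\ne 0$. Then $f(xu_1)\ne 0$ for some $u_1\le 0$, so $xu_1\in K$; and since the total integral $\int_A f(xu)\,du$ vanishes, the integral over $u\ge 0$ is nonzero as well, yielding some $u_2\ge 0$ with $xu_2\in K$. Hence $u_2 - u_1 = (xu_1)^{-1}(xu_2)$ lies in $C$, so $-u_1\le u_2-u_1\le R$; writing $x = (xu_1)\cdot(-u_1)$ with $xu_1\in K$ and $-u_1$ in the compact interval $[0,R]\subseteq A$ shows $x\in K\cdot[0,R]$. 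Thus $\operatorname{supp}\gamma_+(f)$ lies in the image of the compact set $K\times[0,R]$ under multiplication, and is therefore compact.

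Finally I would verify continuity, which is routine: the integrand $u\mapsto f(xu)$ is continuous and, for $x$ ranging over any fixed compact set, is supported in a single compact subset of $A$, so continuity of $\gamma_\pm(f)$ follows from dominated convergence. The main obstacle is the second step, where the per-coset running-integral picture has to be made uniform across cosets; the clean way to do this is the observation that the relevant interval length always lands in the compact set $(K^{-1}K)\cap A$. It is precisely here that the hypothesis $q_*f=0$ is indispensable — without it the running integral $\gamma_+(f)$ would stabilize at the nonzero coset integral and fail to be compactly supported.
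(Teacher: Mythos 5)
Your proof is correct and takes essentially the same route as the paper's: both arguments rest on the compactness of $K^{-1}K\cap A$ together with the identity $\gamma_+(f)+\gamma_-(f)=q_*f(xA)=0$. The only difference is cosmetic --- the paper derives two one-sided support bounds ($K\cdot[-D,\infty)$ and $K\cdot(-\infty,D]$) and intersects them, whereas you use the vanishing coset integral to produce points of $K$ on both sides of $0$ in a single pass; your explicit continuity check is a harmless addition the paper leaves implicit.
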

\begin{proof}
  Denote by $K$ the support of $f$. Then $\gamma_+(f)$ vanishes
  outside $KA$. Take $x\in G$ such that $\gamma_+(f)(x)\ne 0$ and
  write $x=ka$, where $k\in K$, and $a\in A$. By definition of
  $\gamma_+(f)$, there exists $b\in [0,\infty)$ such that
  $f(kab^{-1})\ne 0$, i.e. $kab^{-1}\in K$. We thus obtain $a-b\in
  K^{-1}K\cap A \subseteq [-D,D]$, where $D>0$ is a positive real
  number. Since $b>0$, we get $a \geq -D$, hence $\gamma_+(f)$ is
  supported in $K\cdot [-D,\infty)$. By a similar argument applied to
  $\gamma_-$, we infer that $\gamma_-(f)$ is supported in
  $K\cdot (-\infty,D]$. But
  \begin{equation}
    \gamma_+(f)(x)+\gamma_-(f)(x)=\int_Af(xa^{-1})\,da=q_*f(xA)=0,
  \end{equation}
  which implies that in fact both $\gamma_+(f)$ and $\gamma_-(f)$ have
  supports in $K\cdot [-D,D]$, which is compact.
\end{proof}

For $f\in C_c(G)$ and $x\in G$ define ${}_xf\colon A\to \C$ by
${}_xf(a)=f(xa)$. These functions are compactly supported on $A$, and
thus their Fourier transforms are holomorphic. We have
\begin{equation}
{}_x\gamma_+(f)(b)=\int_A{}_xf(ba^{-1})\psi_+(a)\,da=\int_{-\infty}^b{}_xf(t)\,dt,
\end{equation}
hence ${}_xf=({}_x\gamma_+(f))'$. If $q_*f=0$, then, by
Lemma~\ref{lem:int_cpt_supp} the Fourier transform of
${}_x\gamma_+(f)$ is defined and the following identity is satisfied:
\begin{equation}
  \label{eq:fourier-identity}
  ({}_xf)^\wedge(\chi) = i\chi ({}_x\gamma_+(f))^\wedge(\chi).
\end{equation}
Also, note that
\begin{equation}
  \label{eq:fourier-at-zero}
  ({}_xf)^\wedge(0)=\int_A f(xa)\,da=q_*f(xA).
\end{equation}





\begin{proposition}\label{prop:R_ext}
  Consider an extension $1\To A\To G\overset{q}\To Q\To 1$ of
  Lie groups such that $A\isom\R$. If $C_c(Q)$ has no zero
  divisors, then neither has $C_c(G)$.
\end{proposition}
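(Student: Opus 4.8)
The plan is to assume, for contradiction, that there are nonzero $f,g\in C_c(G)$ with $f*g=0$, and to attach to each factor a nonzero ``symbol'' in $C_c(Q)$, in such a way that the vanishing of $f*g$ forces the two symbols to be zero divisors in $C_c(Q)$. Throughout I write $\Phi_\chi(h)(x)=({}_x h)^\wedge(\chi)$ for the fibrewise Fourier transform, which for $h\in C_c(G)$ is entire in $\chi$ and satisfies $\Phi_\chi(h)(xa)=e^{i\chi a}\Phi_\chi(h)(x)$ for $a\in A$.

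First I would attach to every nonzero $f\in C_c(G)$ an integer ``fibre order''. By \eqref{eq:fourier-at-zero} one has $\Phi_0(f)(x)=q_*f(xA)$, and by \eqref{eq:fourier-identity}, whenever $q_*f=0$ one has $\Phi_\chi(f)(x)=i\chi\,\Phi_\chi(\gamma_+(f))(x)$ with $\gamma_+(f)\in C_c(G)$ by Lemma~\ref{lem:int_cpt_supp}. Iterating, I define $\nu(f)$ to be the least $k\ge 0$ for which $q_*(\gamma_+^k f)\ne 0$; at each earlier stage $q_*(\gamma_+^j f)=0$ guarantees via Lemma~\ref{lem:int_cpt_supp} that $\gamma_+^{k}f$ is again compactly supported and that the identity may be applied once more. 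This $\nu(f)$ is finite: if every $q_*(\gamma_+^k f)$ vanished, then $\Phi_\chi(f)(x)=(i\chi)^k\cdot(\text{entire})$ for all $k$, so each $\Phi_\chi(f)(x)$ would vanish to infinite order at $\chi=0$ and hence be identically zero, forcing $f=0$. I then set $\phi_f:=q_*(\gamma_+^{\nu(f)}f)\in C_c(Q)\setminus\{0\}$ and record the expansion
\[
  \Phi_\chi(f)(x)=(i\chi)^{\nu(f)}\bigl(\phi_f(q(x))+O(\chi)\bigr)\qquad(\chi\to 0),
\]
whose leading coefficient depends on $x$ only through $q(x)$.

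Next I would relate the fibrewise transform to convolution. A direct application of Fubini to the definition of $f*g$ gives the clean identity $\Phi_\chi(f*g)(x)=\int_G f(y)\,\Phi_\chi(g)(y^{-1}x)\,d\lambda_G(y)$. Decomposing $\lambda_G$ along the fibration $1\To A\To G\To Q\To 1$ by Weil's formula and pushing the inner $A$-variable through $\Phi_\chi(g)$ produces a phase governed by the conjugation action of $G$ on $A\isom\R$. Because $A$ is abelian this action is trivial on $A$, so it is given by a homomorphism $\delta\colon Q\to\R_{>0}$, and the character carried by the $f$-factor gets rescaled from $\chi$ to $\delta(\cdot)\chi$. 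Substituting the two leading expansions and reading off the coefficient of $\chi^{\nu(f)+\nu(g)}$, the $A$-integrals collapse and what survives is, up to the nowhere-zero factor $\delta(\cdot)^{\nu(f)}$ and a positive Weil density, a convolution on $Q$ of $\phi_f$ against $\phi_g$.

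Finally, since $f*g=0$ the expression $\Phi_\chi(f*g)$ vanishes identically in $\chi$ and $x$, so this leading coefficient must vanish for every $x$. After absorbing the nowhere-zero factors into a redefinition $\widetilde{\phi_f}$ of the first symbol (still nonzero, continuous and compactly supported), this reads $\widetilde{\phi_f}*\phi_g=0$ in $C_c(Q)$ with both factors nonzero, contradicting the hypothesis that $C_c(Q)$ has no zero divisors; hence $f=0$ or $g=0$. I expect the main obstacle to be the third step: setting up the Weil decomposition and tracking the conjugation twist carefully enough that the coefficient of $\chi^{\nu(f)+\nu(g)}$ is recognised as a genuine convolution on $Q$ of nonzero symbols. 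The rescaling $\chi\mapsto\delta(\cdot)\chi$ is the delicate structural point, and the fact that $\delta$ descends to $Q$ is exactly what makes the argument close up.
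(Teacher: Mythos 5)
Your argument is correct in outline, and its first half --- iterating $\gamma_+$ while $q_*(\gamma_+^j h)=0$, using Lemma~\ref{lem:int_cpt_supp} and \eqref{eq:fourier-identity} to define the order $\nu(h)$ and the nonzero symbol $\phi_h=q_*(\gamma_+^{\nu(h)}h)$ --- is exactly the reduction the paper performs (your $\nu(g)$ is its $n(g)$). Where you genuinely diverge is in how the two symbols are played against each other. The paper never expands $\Phi_\chi(f*g)$: since $\gamma_+$ acts by convolution on the right along the fibre, it commutes with left convolution, $\gamma_+(f*g)=f*\gamma_+(g)$, so the reduction can be applied to the right-hand factor without disturbing $f*g=0$; the left-hand factor is then handled by the involution $f^*(x)=\Delta_G(x)\overline{f(x^{-1})}$, which reverses the order of convolution, and the contradiction comes from simply applying the homomorphism $q_*$ to $\tilde g^**\tilde f=0$. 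Your route instead reads off the coefficient of $\chi^{\nu(f)+\nu(g)}$ in $\Phi_\chi(f*g)(x)$, and it does close up: writing $y=za$ and using normality of $A$ one gets $\Phi_\chi(f*g)(x)=\int_Q \Phi_{\bar\delta(\dot z^{-1}\dot x)^{-1}\chi}(f)(z)\,\Phi_\chi(g)(z^{-1}x)\,d\dot z$ with $\bar\delta$ the conjugation character descended to $Q$, the integrand is well defined on $Q$ (the two phases under $z\mapsto zb$ cancel), and dominated convergence on the compact support extracts the leading coefficient as a twisted convolution of the symbols. Three small corrections: Weil's formula for a closed \emph{normal} subgroup carries no density; $\bar\delta$ takes values in $\R^*$ rather than $\R_{>0}$ unless $G$ is connected (harmless, since only nonvanishing matters); and the twist $\bar\delta^{\pm\nu(f)}$ attaches most naturally to the factor evaluated at $\dot z^{-1}\dot x$, i.e.\ to $\phi_g$ rather than $\phi_f$ --- again harmless bookkeeping. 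The paper's route buys complete avoidance of this Weil/twist computation at the price of the involution trick; yours buys a single symmetric argument that produces both symbols at once and exposes the twisted convolution structure on $Q$, but your third step must actually be written out along the lines above before the proof is complete.
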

\begin{proof}
  Assume to the contrary that there exist nonzero $f,g\in C_c(G)$ such
  that $f*g=0$. Suppose first, that $q_*g =
  0$. By~\eqref{eq:fourier-at-zero}, this means that every Fourier
  transform $({}_xg)^\wedge$ has a zero of order $n_x>0$ at $\chi=0$.
  At least one ${}_xg$ is nonzero, so at least one $n_x$ is
  finite. Let $n(g)=\min_xn_x$. By Lemma~\ref{lem:int_cpt_supp} and
  equation~\eqref{eq:fourier-identity}, each of the functions
  $\gamma_+^k(g)$, where $k=1,\ldots,n(g)$, is compactly supported,
  and $n(\gamma_+^k(g))=n(g)-k$. Therefore $q_*\gamma_+^{n(g)}(g)$ is
  nonzero. Furthermore, it is straightforward to see that
  $\gamma_+(f*g)=f*\gamma_+(g)$, hence $\tilde{g}=\gamma_+^{n(g)}(g)$
  is a new zero divisor, such that $f*\tilde{g}=0$ and $q_*\tilde{g}\ne0$.

  For an arbitrary locally compact group $H$ we may define an
  involution on $C_c(H)$ by $f^*=\Delta_H(x)\overline{f(x^{-1})}$. It satisfies
  $f^**g^*=(g*f)^*$, and commutes with homomorphisms induced by
  quotient maps. In particular, we have
  $\tilde{g}^**f^*=(f*\tilde{g})^*=0$, and
  $q_*(\tilde{g}^*)=(q_*\tilde{g})^*\ne 0$. We may proceed as before
  to replace $f^*$ with $\tilde{f}$ such that $q_*\tilde{f}\ne 0$, and
  $\tilde{g}^**\tilde{f}=0$. This leads to a contradiction, since
  $q_*$ is a homomorphism, so $q_*\tilde f$ and $q_*(\tilde g^*)$ are
  nontrivial zero divisors in $C_c(Q)$.
\end{proof}

\section{Supersolvable Lie groups}
\label{sec:nilpotent-lie-groups}

A real Lie algebra $\g$ is said to be supersolvable (also completely
solvable or triangular), if it contains a complete flag of
ideals, i.e.\ a chain $\g_0<\g_1<\cdots<\g_d=\g$ of ideals of $\g$ such
that $\dim\g_i=i$. Such an algebra is solvable and exponential (see
e.g.~\cite{onishchik1994lie}, Theorem 6.4). A Lie
group $G$ is supersolvable if its Lie algebra is supersolvable.

Supersolvability is a property which interpolates between solvability
and nilpotency. Obviously, any nilpotent Lie algebra is supersolvable,
as its lower central series can be refined to obtain a complete flag
of ideals. The simplest example of a Lie group which is
supersolvable, but not nilpotent, is the group ``$ax+b$'' of affine
transformations of the real line. Its Lie algebra $\g$ is spanned by two
vectors $X, Y$ such that $[X,Y]=Y$, and $0 \leq \R Y \leq \g$ is a
complete flag of ideals. 

Supersolvability is also strictly stronger than solvability. The
simplest example of a solvable Lie group which is not supersolvable is
the group $\mathop{\mathrm{Isom}}(\R^2)$ of affine isometries of the
Euclidean plane. Its Lie algebra is spanned by three vectors $X,Y,Z$
such that $[X,Y]=0$, $[Z,X]=Y$ and $[Z,Y]=-X$. It does not contain a
$1$-dimensional ideal, and therefore does not admit a complete flag of
ideals.

\begin{lemma}\label{lem:nilp_cfree}
  Let $G$ be a connected supersolvable Lie group. Then $G$ is compact-free
  if and only if it is simply connected.
\end{lemma}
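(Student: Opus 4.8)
The plan is to prove the two implications separately, in both cases exploiting the fact recalled above that a connected, simply connected supersolvable Lie group is exponential, so that its exponential map $\exp\colon\g\To G$ is a global diffeomorphism. Since the universal cover $\widetilde G$ of $G$ has the same Lie algebra, it too is simply connected and supersolvable, hence exponential; I will use this in the harder direction.

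For the implication ``simply connected $\Rightarrow$ compact-free'' I would argue directly from the injectivity of $\exp$. If $k=\exp X\ne e$ were a nontrivial element of finite order $m$, then $\exp(mX)=e=\exp(0)$ would force $mX=0$, hence $X=0$, a contradiction; thus $G$ is torsion-free. Likewise a circle subgroup $S^1\le G$ would be of the form $\exp(\R X)$ with $X\ne 0$ and would produce some $t_0>0$ with $\exp(t_0X)=e$, again impossible. Now a nontrivial compact subgroup $K\le G$ has identity component $K^0$ either nontrivial, in which case it contains a torus and hence a circle, or trivial, in which case $K$ is finite and nontrivial and so contains an element of order greater than $1$. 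Either case contradicts the previous two observations, so $G$ is compact-free.

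For the converse I would prove the contrapositive: if $G$ is not simply connected, I will exhibit a nontrivial compact subgroup. Let $p\colon\widetilde G\To G$ be the universal covering homomorphism and $\Gamma=\ker p$, a nontrivial discrete, hence central, subgroup. Choose $\gamma\in\Gamma$ with $\gamma\ne e$ and write $\gamma=\exp X$ with $X\ne 0$, using that $\exp$ is a diffeomorphism on $\widetilde G$. Since multiples of $X$ commute, $H=\exp(\R X)$ is a one-parameter subgroup, and because $\exp$ is a homeomorphism carrying the closed line $\R X$ onto $H$, the subgroup $H$ is closed and isomorphic to $\R$. The intersection $H\cap\Gamma$ is a subgroup of $H\isom\R$ that is discrete (being contained in the discrete set $\Gamma$) and nontrivial (it contains $\gamma$), hence equals $c\Z$ for some $c>0$. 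By the second isomorphism theorem $p(H)$ is the continuous injective image of $H/(H\cap\Gamma)\isom\R/c\Z\isom S^1$; a continuous injection of a compact group into the Hausdorff group $G$ is a homeomorphism onto its image, so $p(H)\le G$ is a nontrivial compact subgroup, contradicting compact-freeness.

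The main obstacle is this converse direction, specifically the step of converting the nontrivial deck-transformation element $\gamma$ into a genuine circle in the quotient. The crucial leverage is that supersolvability forces the exponential map of $\widetilde G$ to be a diffeomorphism: this simultaneously makes the relevant one-parameter subgroup closed and isomorphic to $\R$ and places $\gamma$ on it as a lattice point, so that the quotient is compact. For merely solvable, non-exponential groups this mechanism breaks down, which is consistent with the supersolvability hypothesis being genuinely necessary.
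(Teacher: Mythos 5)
Your proof is correct. The converse direction is essentially the paper's argument with the details filled in: both place a nontrivial deck-group element $\gamma=\exp X$ on the one-parameter subgroup $\exp(\R X)$ of $\widetilde G$ (using that $\widetilde G$ is exponential) and observe that its projection to $G$ is a nontrivial compact subgroup; you additionally verify that $\exp(\R X)$ is closed and isomorphic to $\R$, that $H\cap\Gamma$ is a lattice $c\Z$, and that the image is therefore a circle, which is precisely the justification behind the paper's terse claim that the one-parameter subgroup ``projects onto a compact one-parameter subgroup.'' The forward direction is where you genuinely diverge. The paper argues in one stroke: if $1\ne k=\exp X$ lies in a compact subgroup $K$, then the sequence $\exp(nX)\in K$ must have an accumulation point, while $nX$ has none in $\g$ and $\exp$ is a homeomorphism --- a contradiction covering all compact subgroups at once. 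You instead isolate two obstructions, torsion elements and circle subgroups, each ruled out by injectivity of $\exp$, and then invoke structure theory to show every nontrivial compact subgroup exhibits one of them: Cartan's closed subgroup theorem to view $K$ as a compact Lie group, finiteness of $K$ when $K^0$ is trivial, and the existence of a circle (a nontrivial maximal torus) in any positive-dimensional connected compact Lie group. Both routes work; the paper's is more elementary and self-contained, while yours cleanly identifies the concrete obstructions at the cost of importing compact-group structure theory. If you keep your version, state explicitly that $K$ is closed and hence, by Cartan's theorem, an embedded Lie subgroup, before discussing its identity component as a compact Lie group.
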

\begin{proof}
  Let $G$ be simply connected. Its exponential
  map $\exp\colon\g\to G$ is then a~diffeomorphism. Suppose that
  $K\leq G$ is a nontrivial compact subgroup, and let $1\ne k\in
  K$. There exists $0\ne X\in\g$ such that $k=\exp X$. Since $K$ is
  compact, the sequence $\exp nX$ has an accumulation point, which is
  a contradiction, because the sequence $nX$ has no accumulation
  points in $\g$.

  Now, if $G$ is not simply connected, then it is a quotient of its
  universal cover $\widetilde G$ by a discrete normal subgroup $H$. If
  $1\ne h\in H$, then, since $\widetilde G$ is exponential, there
  exists a one-parameter subgroup of $\widetilde G$ containing $h$. It
  projects onto a compact one-parameter subgroup of $G$.
\end{proof}

\begin{lemma}\label{lem:simpl_conn_quot}
  The quotient of a connected, simply connected Lie group $G$ by a
  connected normal subgroup $N$ is simply connected.
\end{lemma}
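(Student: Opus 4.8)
The plan is to deduce simple connectivity of the quotient from the long exact sequence of homotopy groups attached to the fibration
\begin{equation*}
  N \To G \overset{q}\To G/N.
\end{equation*}
The first thing I would establish is that this really is a fibration: since $N$ is a closed subgroup of the Lie group $G$, the quotient $G/N$ carries a unique smooth manifold structure for which $q$ is a locally trivial fiber bundle with fiber $N$, and in particular a Serre fibration. (Normality of $N$ is what upgrades $G/N$ to a Lie group, but it plays no role in the homotopy computation itself.) Connectivity of $G/N$ is then automatic, since it is the continuous image under $q$ of the connected space $G$.

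With the fibration in hand, I would write down the relevant segment of the long exact sequence of homotopy groups,
\begin{equation*}
  \pi_1(G) \To \pi_1(G/N) \To \pi_0(N).
\end{equation*}
By hypothesis $G$ is simply connected, so $\pi_1(G)=0$; and $N$ is connected, so $\pi_0(N)$ is a single point. Exactness at $\pi_1(G/N)$ then says that the kernel of $\pi_1(G/N)\to\pi_0(N)$ coincides with the (trivial) image of $\pi_1(G)$, so this map is injective; as its target is trivial, we conclude $\pi_1(G/N)=0$, which is exactly the claim.

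The only genuinely non-formal ingredient, and hence the step I would be most careful about, is the fibration structure of $q$, which rests on $N$ being closed so that $G/N$ is a manifold and $q$ is locally trivial. This is harmless in the setting where the lemma is to be applied: for a simply connected solvable (hence exponential) group, the connected subgroup integrating a subalgebra is automatically closed, so the hypothesis is met and $G/N$ is again a Lie group of the expected type. A minor point to pin down is the identification of the boundary map $\pi_1(G/N)\to\pi_0(N)$ and the direction of exactness; but once $\pi_0(N)$ is known to be a one-point set, the conclusion is forced regardless of how that map is described.
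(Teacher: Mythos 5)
Your proof is correct, but it follows a genuinely different route from the paper's. You invoke the locally trivial bundle $N \To G \To G/N$ and read off $\pi_1(G/N)=0$ from the tail of the homotopy long exact sequence, using $\pi_1(G)=0$ and $\pi_0(N)=\ast$; this is a clean, standard argument, and you correctly flag the one non-formal input, namely that $N$ must be closed for $G/N$ to be a manifold and for $q$ to be a fiber bundle (harmless here, since in the application $N=A$ is the closed subgroup corresponding to the ideal $\g_1$, and more generally connected subgroups of simply connected solvable groups are closed). The paper instead stays entirely inside Lie theory: since $G$ is simply connected, the homomorphism $q$ lifts to $\tilde q\colon G\to\widetilde{G/N}$; the lift is surjective because its image is a subgroup containing a neighborhood of the identity in the connected group $\widetilde{G/N}$; and $\ker\tilde q$ is a subgroup of $N$ of dimension $\dim N$, hence equals $N$ by connectedness, so $G/N\isom\widetilde{G/N}$. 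Your version buys generality and brevity (it works for any locally trivial bundle and also yields higher homotopy information for free) at the cost of importing the fibration long exact sequence; the paper's version is self-contained at the level of covering-space lifting of homomorphisms, but requires the slightly more delicate identification of $\ker\tilde q$ with $N$ via a dimension count. Either argument is acceptable.
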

\begin{proof}
  Since $G$ is simply connected, the quotient homomorphism $q\colon
  G\to G/N$ has a lift $\tilde q$ to the universal cover
  $\widetilde{G/N}$. It is surjective, because its image contains a~neighborhood of $1$, and $\widetilde{G/N}$ is
  connected. Furthermore, its kernel is a subgroup of $N$, and $\dim N
  = \dim\ker \tilde q$. Since $N$ is connected, we have $\ker\tilde q
  = N$, hence $G/N$ is isomorphic to its universal cover.
\end{proof}

The proof of the Main Theorem now becomes a mere formality:

\begin{proof}[Proof of the Main Theorem]
  By Lemma \ref{lem:CM_equiv} we may consider the convolution algebra
  $C_c(G)$ instead of $\M_c(G)$. We proceed by induction on $d=\dim
  G$. If $d=1$, then $G\isom\R$ and $C_c(\R)$ has no zero
  divisors. Now, suppose that $d>1$. Let $0=\g_0<\g_1<\cdots<\g_d=\g$
  be a~complete flag of ideals in the Lie algebra of $G$. The ideal
  $\g_1$ corresponds to a~closed normal subgroup $A\lhd G$, isomorphic
  to $\R$. The quotient $G/A$ is supersolvable and simply connected by
  Lemma~\ref{lem:simpl_conn_quot}, and $\dim G/A < d$. Hence
  $C_c(G/A)$ contains no zero divisors, and by
  Proposition~\ref{prop:R_ext}, $C_c(G)$ also has no zero divisors.
\end{proof}

\bibliographystyle{plain}
\bibliography{zerodivbib,/home/lukgar/library}

\end{document}